\def\BibTeX{{\rm B\kern-.05em{\sc i\kern-.025em b}\kern-.08em
    T\kern-.1667em\lower.7ex\hbox{E}\kern-.125emX}}
\def\twon #1{\left\|#1\right\|_2}
\def\frobn #1{\left\|#1\right\|_{\text{F}}}
\def\abs #1{\left|#1\right|}
\def\st{\text{subject to }}
\def\bC{\mathbb{C}}
\def\bR{\mathbb{R}}
\def\bN{\mathbb{N}}
\def\bS{\mathbb{S}}
\def\bL{\mathbb{L}}
\def\m #1{\boldsymbol{#1}}
\def\cD{\mathcal{D}}
\def\cH{\mathcal{H}}
\def\cI{\mathcal{I}}
\def\cO{\mathcal{O}}
\def\cP{\mathcal{P}}
\def\cT{\mathcal{T}}
\def\cW{\mathcal{W}}
\def\cG{\mathcal{G}}
\def\bee{\begin{equation}}
	\def\ene{\end{equation}}
\def\beq{\begin{eqnarray}}
	\def\enq{\end{eqnarray}}
\newtheorem{thm}{Theorem}
\def\equ #1{\begin{equation}#1\end{equation}}
\def\sbra #1{\left(#1\right)}
\def\mbra #1{\left[#1\right]}
\def\lbra #1{\left\{#1\right\}}
\def\diag #1{\text{diag}#1}
\def\rank #1{\text{rank}#1}
\def\st {\text{ subject to }}
\begin{document}

\title{Low-Complexity Algorithms for Multichannel Spectral Super-Resolution \\
\thanks{The research of the project was supported by the National Natural Science Foundation of China under Grant 12371464.}
}

\author{Xunmeng Wu, Zai Yang, and Zongben Xu \\
	\textit{School of Mathematics and Statistics, Xi’an Jiaotong University, China} \\
	\textit{wxm1996@stu.xjtu.edu.cn; yangzai@xjtu.edu.cn; zbxu@xjtu.edu.cn}
}

\maketitle

\begin{abstract}
This paper studies the problem of multichannel spectral super-resolution with either constant amplitude (CA) or not. We propose two optimization problems based on low-rank Hankel-Toeplitz matrix factorization. The two problems effectively leverage the multichannel and CA structures, while also enabling the design of low-complexity gradient descent algorithms for their solutions. Extensive simulations show the superior performance of the proposed algorithms.
\end{abstract}

\begin{IEEEkeywords}
	Multiple measurements vectors, Hankel-Toeplitz matrix, constant amplitude.
\end{IEEEkeywords}

\section{Introduction}
Spectral super-resolution refers to the process of estimating the frequencies of several complex sinusoids from their superimposed samples \cite{candes2014towards}. In this paper, we focus on the multichannel spectral super-resolution problem, also known as multichannel frequency estimation \cite{yang2018sample,wu2023multichannel,wu2024multichannel,li2022stability,liu2024mathematical,fei2023iff}, where the sinusoids among multiple channels share the same frequency. In particular, the full signal matrix $\m{X}^\star$ is composed of equispaced samples $\set{x_{jl}^\star}$, which are given by
\equ{
	x_{jl}^\star = \sum^K_{k=1} b_{kl}^\star e^{i \sbra{-2\pi f_k^\star \cdot (j-1) + \phi_{kl}^\star } }, \; j \in \bN, \; l \in \bL,
}
where $\bN \triangleq \lbra{ 1, \ldots, N }$, $\bL \triangleq \lbra{ 1, \ldots, L }$, $N$ is the full sample size for each channel, $L$ and $K$ are the number of channels and frequencies, respectively, $f_k^\star \in \mathbb{F} \triangleq [0,1)$, $b_{kl}^\star > 0$, and $\phi_{kl}^\star \in \bR$ denote the $k$th unknown normalized frequency, amplitude, and phase in the $l$-th channel, respectively, and $i=\sqrt{-1}$. When $b_{kl}^\star = b_k^\star$ for $l \in \bL$, $\m{X}^\star$ is known as a constant amplitude (CA) signal, also referred to as a constant modulus signal \cite{van1996analytical,leshem1999direction,wu2024direction,wu2024multichannel}; otherwise, it is classified as a general multichannel signal.

We consider a compressive setting in which only a subset $\Omega$ of rows of $\m{X}^\star$ is observed. Specifically, we have access to the incomplete signal matrix $\cP_{\Omega}(\m{X}^\star)$ that is given by
\equ{
	\cP_{\Omega}(\m{X}^\star) = \sum_{j\in \Omega} \m{e}_j^T \m{X}^\star , \label{eq:ob}
}
where $\Omega \subset \bN$ with $\abs{\Omega} = M$ and $\m{e}_j$ is a canonical basis of $\bR^N$. Our goal is to estimate $\lbra{f_k^\star}$ from $\cP_{\Omega}(\m{X}^\star)$.

Multichannel spectral super-resolution is an essential topic within statistical signal processing and has been researched under various related areas such as harmonic retrieval in spectral analysis \cite{stoica2005spectral} and direction-of-arrival estimation in array processing \cite{krim1996two,yang2018sparse,pesavento2023three,kalra2024stable,qiao2019guaranteed}. The classical subspace methods, including MUSIC \cite{schmidt1986multiple} and ESPRIT \cite{roy1989esprit}, can achieve arbitrarily high resolution from complete noiseless samples; however, their performance deteriorates in the case of incomplete observations concerned in this paper. To overcome the limitation of subspace methods, sparse optimization and compressed sensing methods \cite{yang2016exact,yang2018sample,li2015off,steffens2018compact,cai2018spectral,wu2024multichannel,li2024effectiveness,wang2018ivdst,castanheira2019low,hansen2018superfast,mao2022blind,yao2024accelerated}, particularly the atomic norm minimization (ANM) approaches \cite{yang2016exact,yang2018sample,li2015off,wu2024multichannel,li2024effectiveness}, have been proposed. Leveraging the fact that $K \ll N$, ANM formulates super-resolution as a spectral sparse signal recovery problem, which can be cast as a semidefinite program (SDP). This optimization framework of ANM is flexible in dealing with the incomplete observations by treating them as variables to optimize. However, solving the SDP typically suffers from high computational complexity, which restricts its applicability in large-scale problems.

For single-channel spectral super-resolution, a fast and effective gradient descent algorithm for the Hankel-Toeplitz matrix factorization problem has been proposed in \cite{wu2023spectral}. It remains unclear how to extend this approach to the multichannel scenarios addressed in this paper. Notably, two rank-constrained positive semi-definite (PSD) Hankel-Toeplitz matrix optimization problems have been proposed for general multichannel signals in \cite{wu2023multichannel} and CA signals in \cite{wu2024direction}, respectively. Although both methods exhibit good accuracy, they require truncated eigen-decomposition in each iteration to handle the rank and PSD constraints, resulting in a rather high computational complexity. This prompts us to develop associated low-complexity algorithms.

In this paper, for multichannel spectral super-resolution with and without CA, we propose two corresponding optimization problems along with low-complexity algorithms based on low-rank Hankel-Toeplitz matrix factorization. Specifically, we factorize the rank-constrained PSD Hankel-Toeplitz matrix for each channel, proposed in \cite{wu2023multichannel,wu2024direction}, as a product of a low-rank factor matrix and its conjugate transpose, thereby eliminating the rank and PSD constraints. We then impose certain linear constraints on the factor matrices to effectively utilize the multichannel and CA structures. Consequently, we formulate two unconstrained optimization problems: one for general multichannel signals and another for CA signals. To solve these problems, we propose two low-complexity gradient descent algorithms, where the gradients can be efficiently computed via the fast Fourier transform (FFT).

\emph{Notations:} For vector $\m{x}$, its complex conjugate and $\ell_2$-norm are denoted by $\overline{\m{x}}$ and $\twon{\m{x}}$, respectively.
For matrix $\m{X}$, its transpose, complex conjugate, conjugate transpose, and Frobenius norm are denoted by $\m{X}^T$, $\overline{\m{X}}$, $\m{X}^H$, and $\frobn{\m{X}}$, respectively. The $(i,j)$-th entry of $\m{X}$ is $x_{ij}$, and the $i$-th row and $j$-th column of $\m{X}$ are $\m{X}_{i,:}$ and $\m{X}_{:,j}$, respectively. Let $\bS_K^+ \triangleq \lbra{\m{X}: \m{X} \succeq \m{0}, \rank\sbra{\m{X}} \le K}$ denote the set of rank-constrained PSD matrices. We denote $\odot$ as the Hadamard product and $\cI$ as the identity operator. 

\section{Algorithm for General Multichannel Signals} \label{sec:2}
Since directly estimating the frequencies from the incomplete observation is difficult, it is natural to consider the following signal recovery problem:
\equ{
	\text{find } \m{X}, \st \cP_{\Omega}(\m{X}) = \cP_{\Omega}(\m{X}^\star) \text{ and } \m{X} \in S_0^L, \label{eq:rec_x}
}
where the set of general multichannel signals $S_0^L$ is given by
\equ{
	S_0^L \triangleq \lbra{ \m{A}\sbra{\m{f}}\m{S}: f_k \in \mathbb{F}, \m{S} \in \bC^{K \times L} }, \nonumber
} 
with $\m{A}\sbra{\m{f}} =\mbra{\m{a}(f_1),\ldots,\m{a}(f_K)}$ and $\mbra{\m{a}(f)}_j = e^{-i2\pi f(j-1)},j\in \bN$. 
Given the recovered signal, the frequency vector $\m{f}$ can be retrieved using subspace methods \cite{schmidt1986multiple,roy1989esprit}.

The problem in \eqref{eq:rec_x} can be equivalently transformed to the following weighted least squares problem:
\equ{
	\min_{\m{X}} \sum_{l=1}^L \twon{\cP_{\Omega} \cD (\m{X}_{:,l}-\m{X}^\star_{:,l})}^2, \st \m{X} \in S_0^L, \label{eq:p1}
} 
where $\cD$ is a weighted operator that is defined as  $\cD\m{x}=\m{\omega} \odot \m{x}$. The $N\times 1$ weight vector $\m{\omega}$ will be specified later. Without loss of generality, we assume that $N=2n-1$ for some integer $n$. If $N=2n-2$, we can consider the case where the last entry of a signal of length $N+1$ is unobserved. 

It is shown in \cite[Theorem 3]{wu2023multichannel} that when $K<n$, $S_0^L$ is equivalent to $S_{\text{HT}}^L$, which is given by
\equ{
	\begin{split}
		S_{\text{HT}}^L & \triangleq  \left\{ {\m{X}:\; \begin{bmatrix}\cT\overline{\m{t}^l} & \cH \overline{\m{X}}_{:,l} \\ \cH \m{X}_{:,l} & \cT \m{t} \end{bmatrix} \in \bS_K^+, \; l\in \bL,} \right.\\
		& \left.{  \text{for some } \lbra{\m{t}^l \in \bC^N },\m{t} \in \bC^N \text{ satisfying } \sum_{l=1}^L\m{t}^l = L\m{t}} \right\},  
	\end{split} \nonumber 
}
where the Hankel matrices $\cH\m{X}_{:,l} \in \bC^{n\times n}$ and the Toeplitz matrices $\cT\m{t}^l, \cT\m{t} \in \bC^{n\times n}$. Consequently, \eqref{eq:p1} is equivalent to the following Hankel-Toeplitz matrix optimization problem:
\equ{
	\min_{\m{X}} \sum_{l=1}^L \twon{\cP_{\Omega} \cD (\m{X}_{:,l}-\m{X}^\star_{:,l})}^2, \st \m{X} \in S_{\text{HT}}^L. \label{eq:p2}
} 

The ADMM algorithm can be applied to solve the problem in \eqref{eq:p2}. However, the rank and PSD constraints in $S_{\text{HT}}^L$ necessitates truncated eigen-decomposition in each iteration, leading to a rather high computational complexity of $\mathcal{O}(LKN^2)$ \cite{wu2023multichannel}. 

\subsection{Proposed Optimization Problem} \label{sec:Problem_MHTGD}
To develop a low-complexity algorithm, we reparameterize the low-rank PSD Hankel-Toeplitz matrices by the following factorization
\equ{
	\begin{bmatrix}\cT\overline{\m{t}^l} & \cH \overline{\m{X}}_{:,l} \\ \cH \m{X}_{:,l} & \cT \m{t} \end{bmatrix} = \begin{bmatrix} \m{Z}_1^l \\ \m{Z}_2^l \end{bmatrix} \begin{bmatrix} \m{Z}_1^l \\ \m{Z}_2^l \end{bmatrix}^H = \begin{bmatrix} \m{Z}_1^l \m{Z}_1^{l,H} & \m{Z}_1^l\m{Z}_2^{l,H} \\ \m{Z}_2^l \m{Z}_1^{l,H} & \m{Z}_2^l \m{Z}_2^{l,H} \end{bmatrix},
}
where $\m{Z}_1^l$ and $\m{Z}_2^l$ are $n\times K$ factor matrices. Subsequently, we propose a novel set $S_{\text{MF}}^L$ of multichannel Hankel-Toeplitz matrix factorization as follows:
\equ{
	\begin{split}
		S_{\text{MF}}^L =  & \left\{  {\m{X}:\, \cH\m{X}_{:,l} =\m{Z}_2^l \m{Z}_1^{l,H}, \cT\overline{\m{t}^l}=\m{Z}_1^l\m{Z}_1^{l,H},} \right.\\
		& \left. { \qquad \; \; \frac{1}{L} \sum_{q=1}^L \cT \m{t}^q = \m{Z}_2^l \m{Z}_2^{l,H}, \; l \in \bL,} \right.\\
		& \left. { \qquad \; \text{ for some } \m{t}^l \in\bC^N \; \text{and} \; \m{Z}_1^l, \m{Z}_2^l \in \bC^{n\times K},  l \in \bL } \right\} \! . 
	\end{split} \nonumber
}
\begin{thm} \label{thm:t1}
	Assume $K<n$. Then we have $S_{\text{MF}}^L = S_{\text{HT}}^L = S_0^L$.
\end{thm}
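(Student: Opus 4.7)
The plan is to rely on the already-established equivalence $S_{\text{HT}}^L = S_0^L$ from \cite[Theorem 3]{wu2023multichannel}, so the substantive task is to show $S_{\text{MF}}^L = S_{\text{HT}}^L$. I will prove the two inclusions separately, and the argument reduces essentially to the standard fact that a PSD matrix of rank at most $K$ admits an outer-product factorization $\m{F}\m{F}^H$ with $\m{F}$ having $K$ columns.

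For the inclusion $S_{\text{MF}}^L \subseteq S_{\text{HT}}^L$, suppose $\m{X} \in S_{\text{MF}}^L$ with the associated $\m{t}^l$ and factor matrices $\m{Z}_1^l, \m{Z}_2^l$. Define $\m{t} \triangleq \frac{1}{L}\sum_{q=1}^L \m{t}^q$, so that $\sum_{l=1}^L \m{t}^l = L\m{t}$. By linearity of the Toeplitz operator, $\cT\m{t} = \frac{1}{L}\sum_q \cT\m{t}^q = \m{Z}_2^l \m{Z}_2^{l,H}$ for every $l$. Combined with the other two factorization identities in the definition of $S_{\text{MF}}^L$, the block matrix assembles as
\equ{
\begin{bmatrix}\cT\overline{\m{t}^l} & \cH \overline{\m{X}}_{:,l} \\ \cH \m{X}_{:,l} & \cT \m{t} \end{bmatrix} = \begin{bmatrix} \m{Z}_1^l \\ \m{Z}_2^l \end{bmatrix}\begin{bmatrix} \m{Z}_1^l \\ \m{Z}_2^l \end{bmatrix}^H,\nonumber
}
which is PSD with rank at most $K$, certifying $\m{X} \in S_{\text{HT}}^L$.

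For the converse inclusion $S_{\text{HT}}^L \subseteq S_{\text{MF}}^L$, take $\m{X} \in S_{\text{HT}}^L$ with associated $\m{t}^l$ and $\m{t}$ satisfying $\sum_l \m{t}^l = L\m{t}$. For each $l$, the block matrix lies in $\bS_K^+$, so a rank-revealing factorization (e.g.\ via truncated eigen-decomposition, padding with zero columns if the actual rank is below $K$) yields $\m{Z}_1^l, \m{Z}_2^l \in \bC^{n\times K}$ with the block matrix equal to $[\m{Z}_1^{l,T}, \m{Z}_2^{l,T}]^T [\m{Z}_1^{l,T}, \m{Z}_2^{l,T}]^{T,H}$. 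Reading off the blocks gives $\cT\overline{\m{t}^l} = \m{Z}_1^l\m{Z}_1^{l,H}$, $\cH\m{X}_{:,l} = \m{Z}_2^l\m{Z}_1^{l,H}$, and $\cT\m{t} = \m{Z}_2^l\m{Z}_2^{l,H}$. Finally, the constraint $\sum_l \m{t}^l = L\m{t}$ together with linearity of $\cT$ rewrites $\cT\m{t}$ as $\frac{1}{L}\sum_q \cT\m{t}^q$, so $\frac{1}{L}\sum_q \cT\m{t}^q = \m{Z}_2^l\m{Z}_2^{l,H}$ for every $l$, which is exactly the remaining defining condition of $S_{\text{MF}}^L$. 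Hence $\m{X} \in S_{\text{MF}}^L$.

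I expect no serious obstacle, since the argument is essentially an unpacking of definitions plus the existence of a PSD square-root factorization of width $K$. The only point that needs care is verifying that the auxiliary variable $\m{t}$ in $S_{\text{HT}}^L$ aligns with the averaging relation $\frac{1}{L}\sum_q \m{t}^q$ used in $S_{\text{MF}}^L$, and that the rank bound $K < n$ (which guarantees the factorization width does not exceed the matrix dimension) is what permits the width-$K$ factor matrices; both are straightforward bookkeeping.
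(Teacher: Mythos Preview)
Your proof is correct. The direction $S_{\text{MF}}^L \subseteq S_{\text{HT}}^L$ is identical to the paper's. For the converse $S_{\text{HT}}^L \subseteq S_{\text{MF}}^L$, however, you take a genuinely different and more elementary route: you factor each rank-$K$ PSD block matrix directly via a truncated eigen-decomposition and read off the required identities. The paper instead passes through the equivalence $S_{\text{HT}}^L = S_0^L$, writes $\m{X} = \m{A}(\m{f})\m{S}$, and then builds explicit Vandermonde-based factors $\m{Z}_1^l = \overline{\m{A}}\,\diag(\m{p})^{-1/2}\diag(\overline{\m{S}}_{:,l})\diag(\phi(\m{S}_{:,l}))$ and $\m{Z}_2^l = \m{A}\,\diag(\m{p})^{1/2}\diag(\phi(\m{S}_{:,l}))$. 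Your argument is cleaner and in fact establishes $S_{\text{MF}}^L = S_{\text{HT}}^L$ without ever invoking the hypothesis $K<n$ (that hypothesis is needed only for the step $S_{\text{HT}}^L = S_0^L$, so your closing remark that $K<n$ is what ``permits the width-$K$ factor matrices'' is slightly off---a rank-$K$ PSD matrix always admits such a factorization). The paper's construction, on the other hand, exhibits the factor matrices concretely in terms of the underlying frequencies and amplitudes, which is informative for the algorithmic developments that follow.
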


\begin{proof}
	According to \cite[Theorem 3]{wu2023multichannel}, we have $S_{\text{HT}}^L = S_0^L$. Then given any $\m{X} \in S_{\text{HT}}^L$, we have $\m{X} = \m{A}\sbra{\m{f}}\m{S}\in S_0^L$. Assume that $\twon{\m{S}_{k,:}} \neq 0, k = 1,\ldots,K$; otherwise, $\m{X}$ would be composed of $K'$ sinusoids where $K'<K$. In this case, we let $K=K'$. Let $\m{p}$ and $\m{p}^l$ be two $K\times 1$ vectors defined by $p_k = \frac{\twon{\m{S}_{k,:}}}{\sqrt{L}}$ and $p_k^l = \frac{\abs{s_{kl}}^2}{p_k}$, respectively, for $k = 1, \ldots, K$. It immediately follows that $\sum^L_{l=1} \m{p}^l = L \m{p}$. Let $\phi \sbra{ \m{S}_{:,l} }$ denote the phase vector of $\m{S}_{:,l}$, satisfying $\mbra{\phi \sbra{ \m{S}_{:,l} }}_k = s_{kl} / \abs{s_{kl}}$. Let $\m{A}$ denote the matrix formed by the first $n$ rows of $\m{A}\sbra{\m{f}}$. Let 
	\equ{
		\begin{split}
			& \m{Z}_1^l = \overline{\m{A}} \diag \sbra{ \m{p} }^{-\frac{1}{2}} \diag\sbra{\overline{\m{S}}_{:,l}} \diag \sbra{ \phi \sbra{ \m{S}_{:,l} } }, \\
			& \m{Z}_2^l = \m{A} \diag \sbra{\m{p}}^{\frac{1}{2}} \diag \sbra{ \phi \sbra{ \m{S}_{:,l} } }.
		\end{split} \label{eq:Z1Z2}
	}
	Subsequently, we can verify that
	\equ{
		\begin{split}
			& \m{Z}_2^l \m{Z}_1^{l,H} =  \m{A} \diag\sbra{\m{S}_{:,l}} \m{A}^T  = \cH\m{X}_{:,l}, \\
			& \m{Z}_1^l \m{Z}_1^{l,H} = \overline{\m{A}} \diag\sbra{ \m{p}^l } \m{A}^T = \cT \overline{\m{t}^l} \text{ for some } \m{t}^l, \\
			& \m{Z}_2^l \m{Z}_2^{l,H} = \m{A} \diag \sbra{\m{p}} \m{A}^H = \frac{1}{L} \m{A} \diag \sbra{\m{p}^l} \m{A}^H = \frac{1}{L} \sum_{q=1}^L \cT \m{t}^q.
		\end{split} \nonumber
	} 
	Consequently, we have $\m{X}\in S_{\text{MF}}^L$, and thus $S_{\text{HT}}^L \subseteq S_{\text{MF}}^L$. 
	
	On the other hand, for any $\m{X}\in S_{\text{MF}}^L$, we have 
	\equ{
		\begin{split}
			\begin{bmatrix}\cT\overline{\m{t}^l} & \cH \overline{\m{X}}_{:,l} \\ \cH \m{X}_{:,l} & \cT \m{t} \end{bmatrix} & = \begin{bmatrix}\cT\overline{\m{t}^l} & \cH \overline{\m{X}}_{:,l} \\ \cH \m{X}_{:,l} & \frac{1}{L} \sum_{q=1}^L \cT \m{t}^q \end{bmatrix} \\
			& = \begin{bmatrix} \m{Z}_1^l \m{Z}_1^{l,H} & \m{Z}_1^l\m{Z}_2^{l,H} \\ \m{Z}_2^l \m{Z}_1^{l,H} & \m{Z}_2^l \m{Z}_2^{l,H} \end{bmatrix} = \begin{bmatrix} \m{Z}_1^l \\ \m{Z}_2^l \end{bmatrix} \begin{bmatrix} \m{Z}_1^l \\ \m{Z}_2^l \end{bmatrix}^H.
		\end{split} \nonumber
	}
	This means that the Hankel-Toeplitz matrices belong to the set $\bS_K^+$. Consequently, we arrive at $\m{X}\in S_{\text{HT}}^L$ and $S_{\text{MF}}^L \subseteq S_{\text{HT}}^L$. Finally, we conclude that $S_{\text{MF}}^L = S_{\text{HT}}^L$.
\end{proof}

Using Theorem \ref{thm:t1}, we can rewrite the optimization problem in \eqref{eq:p2} as follows:
\equ{ 
	\begin{split}
		\min_{\m{X},\lbra{\m{t}^l},\lbra{\m{Z}_1^l,\m{Z}_2^l}} \sum_{l=1}^L  & \twon{\cP_{\Omega} \cD (\m{X}_{:,l}-\m{X}^\star_{:,l})}^2, \\
		 \st \cH\m{X}_{:,l} & =\m{Z}_2^l\m{Z}_1^{l,H}, \\
		\qquad \qquad \; \; \cT\overline{\m{t}^l} &=\m{Z}_1^l\m{Z}_1^{l,H}, \\
		\qquad \qquad \; \; \frac{1}{L} \sum_{q=1}^L \cT \m{t}^q & = \m{Z}_2^l \m{Z}_2^{l,H}, \; l\in \bL.
	\end{split} \label{eq:p3}
}

To reduce the number of variables to be optimized in \eqref{eq:p3}, we define the weight vector $\m{\omega}$ in $\cD$ such that $\omega_j = \sqrt{a_j}$, where $a_j$ represents the number of elements in the $j$th skew-diagonal of an $n \times n$ matrix.
It can be shown that $\cH^*\cH = \cD^2$ and $\cT^*\cT = \cD^2$. We then define $\cG \triangleq \mathcal{H}\mathcal{D}^{-1}$ and $\cW \triangleq \mathcal{T}\mathcal{D}^{-1}$.
Let $\cG^*$ and $\cW^*$ be the adjoint of $\cG$ and $\cW$, respectively.
It follows that $\cG^*\cG=\mathcal{I}$ and $\cW^*\cW=\mathcal{I}$. 
Subsequently, the constraints in \eqref{eq:p3} are satisfied if and only if the following conditions hold:
\equ{
	\begin{split}
		 (\mathcal{I}-\cG\cG^*)(\m{Z}_2^l\m{Z}_1^{l,H}) &=  \m{0}, \\ 
		 (\mathcal{I}-\cW\cW^*)(\m{Z}_1^l\m{Z}_1^{l,H}) & =  \m{0}, \\
		\sum_{q=1}^L \overline{\m{Z}_1^q} \m{Z}_1^{q,T} &= L \m{Z}_2^l \m{Z}_2^{l,H}, \; l \in \bL.
	\end{split} \label{eq:constraint}
}
Additionally, we introduce a new matrix $\m{Y}$ such that $\m{Y}_{:,l} = \cD \m{X}^\star_{:,l}$ for $l\in \bL$. Let us denote $\m{Z}^l = \mbra{ \m{Z}_1^{l,T}, \m{Z}_2^{l,T}}^T$. Then the problem in \eqref{eq:p3} is reformulated as follows:
\equ{
	\begin{split}
		\min_{\lbra{\m{Z}^l}} \sum^L_{l=1} \twon{ \cP_{\Omega} \cG^* \sbra{ \m{Z}_2^l\m{Z}_1^{l,H} - \cG\m{Y}_{:,l} } }^2,  \st \eqref{eq:constraint}.
	\end{split} \label{eq:p4}
}

Denote the sampling ratio $p = M/N$. Putting the objective function and the constraints in \eqref{eq:p4} together yields the following equivalent unconstrained minimization problem:
\begin{equation} 
	\begin{split}
		\min_{\lbra{\m{Z}^l}} f\sbra{\lbra{\m{Z}^l}} \triangleq 
		& \sum^L_{l=1}  \left[ { \frac{1}{2p} \twon{ \cP_{\Omega} \cG^* \sbra{ \m{Z}_2^l\m{Z}_1^{l,H} - \cG\m{Y}_{:,l} } }^2 } \right.\\
		& \left. { \qquad + \frac{1}{2}\frobn{(\mathcal{I}-\cG\cG^*)(\m{Z}_2^l\m{Z}_1^{l,H})}^2  } \right.\\
		& \left. { \qquad + \frac{1}{4}\frobn{(\mathcal{I}-\cW\cW^*)(\m{Z}_1^l \m{Z}_1^{l,H})}^2 } \right.\\
		& \left. { \qquad + \frac{1}{4} \frobn{ \sum_{q=1}^L \overline{\m{Z}_1^q} \m{Z}_1^{q,T} - L \m{Z}_2^l \m{Z}_2^{l,H} }^2 } \right].
	\end{split} \label{eq:p_f}
\end{equation} 

\subsection{Proposed Algorithm}
We propose an efficient gradient descent algorithm, designated as MHTGD, to solve the problem in \eqref{eq:p_f}; see Algorithm \ref{alg:MHTGD}.
\begin{algorithm}
	\caption{Multichannel Hankel-Toeplitz matrix factorization-based Gradient Descent (MHTGD)}
	\begin{algorithmic}
		\REQUIRE $\Gamma_K\sbra{p^{-1}\cG\mathcal{P}_{\Omega}(\m{Y}_{:,l})} = \m{U}^{l}\m{\Sigma}^l \m{V}^{l,H}$, $\m{Z}_1^{l,0} = \m{V}^l \m{\Sigma}^{l,1/2}$, and $\m{Z}_2^{l,0} = \m{U}^l \m{\Sigma}^{l,1/2}$ for $l\in \bL$.
		\WHILE{$t<t_{\text{max}}$ or the stopping criterion is not met}
		\STATE $\m{Z}^{l,t+1} = \m{Z}^{l,t} -\eta \nabla f(\m{Z}^{l,t}), \; l \in \bL.$
		\STATE $t=t+1$.
		\ENDWHILE
		\ENSURE $\set{\widehat{\m{Z}}^l}$, $\widehat{\m{X}}_{:,l} = \mathcal{D}^{-1}\cG^*\sbra{\widehat{\m{Z}}_2^l \widehat{\m{Z}}_1^{l,H}},\; l \in \bL.$
	\end{algorithmic} 	\label{alg:MHTGD}
\end{algorithm}
In the MHTGD algorithm, we first apply $K$-truncated singular value decomposition to obtain a rank-$K$ approximation of the matrix $p^{-1}\cG\mathcal{P}_{\Omega}(\m{Y}_{:,l})$, followed by the initialization of the factor matrices $\lbra{\m{Z}^l}$. Then each factor matrix $\m{Z}^l$ is updated iteratively in the direction of the negative gradient with a step size $\eta$.

The gradient $\nabla f(\m{Z}^l) = \mbra{ \nabla f(\m{Z}^1_1)^T, \nabla f(\m{Z}^L_2)^T }^T$, calculated by Wirtinger calculus \cite{hunger2007introduction}, is given by
\equ{ { \small 
	\begin{split}
		\nabla f(\m{Z}_1^l) 
		& = \cG\mbra{\frac{1}{p} \cP_{\Omega} \sbra{ \cG^* \sbra{ \m{Z}_1^l\m{Z}_2^{l,H}} - \overline{\m{Y}}_{:,l} } - \cG^* \sbra{\m{Z}_1^l\m{Z}_2^{l,H}} } \m{Z}_2^l \\
		& \quad -\cW\cW^* \sbra{\m{Z}_1^l \m{Z}_1^{l,H}} \m{Z}_1^l  + \m{Z}_1^l \sbra{ \m{Z}_1^{l,H} \m{Z}_1^l + \m{Z}_2^{l,H} \m{Z}_2^l } \\
		& \quad + L \sum_{q=1}^L \mbra{ \m{Z}_1^q \sbra{\m{Z}_1^{q,H} \m{Z}_1^l} - \overline{\m{Z}_2^q} \sbra{ \m{Z}_2^{q,T} \m{Z}_1^l} }, \; l \in \bL, \nonumber
	\end{split} 
	}
}
\equ{ { \small 
	\begin{split}
		\nabla f(\m{Z}_2^l) 
		& = \cG\mbra{\frac{1}{p} \cP_{\Omega} \sbra{ \cG^* \sbra{ \m{Z}_2^l\m{Z}_1^{l,H}} - \m{Y}_{:,l} } - \cG^* \sbra{\m{Z}_2^l\m{Z}_1^{l,H}} } \m{Z}_1^l \\
		& \quad + \m{Z}_2^l \sbra{ \m{Z}_1^{l,H} \m{Z}_1^l + L^2 \m{Z}_2^{l,H} \m{Z}_2^l } \\
		& \quad - L \sum^L_{q=1} \overline{\m{Z}_1^q} \sbra{\m{Z}_1^{q,T} \m{Z}_2^l}, \; l \in \bL.
	\end{split} \nonumber 
	}
}

In $\nabla f(\m{Z}^l)$, the operations $(\cG\m{v})\m{Z}$, $(\cW\m{v})\m{Z}$, $\cG^*\sbra{\m{Z}\m{Z}^H}$, and $\cW^*\sbra{\m{Z}\m{Z}^H}$, where $\m{v}\in\bC^N$ and $\m{Z}\in \bC^{n\times K}$, can be efficiently computed through fast convolutions using FFT, which costs $\cO(K N\log N)$ flops. The remaining matrix multiplications, such as $\m{Z}\sbra{\m{Z}^H\m{Z}}$, require $\cO (K^2 N)$ flops. Consequently, the MHTGD algorithm exhibits a per-iteration computational complexity of $\cO\sbra{LKN\log N + L^2K^2N}$. Since the gradient computations for each channel $l$ are independent, we can leverage parallel computing to reduce the complexity to $\cO\sbra{KN\log N + K^2N}$. The complexity of MHTGD is more efficient than the $\cO\sbra{ L K N^2 }$ complexity of StruMER \cite{wu2023multichannel} and the $\cO\sbra{ (N+L^2)^{2} (N+L)^{2.5} }$ complexity of ANM \cite{yang2016exact,yang2018sparse}.

\section{Algorithm for Constant Amplitude Signals}
As in Section \ref{sec:2}, we consider the problem in \eqref{eq:p1}, where $S_0^L$ is replaced by the set of CA signals $S_0^C$, given by
\equ{
	S_0^C \triangleq \lbra{ \m{A}\sbra{\m{f}} \diag\sbra{\m{b}} \m{\Phi}: f_k \in \mathbb{F}, b_k > 0, \Phi_{kl} = e^{i\phi_{kl}} }. \nonumber
} 

It is shown in \cite{wu2024direction} that, when $K<n$, $S_0^C$ can be tightly relaxed to $S_{\text{HT}}^C$ that is given by
\equ{
	\begin{split}
		S_{\text{HT}}^C \triangleq \lbra{ \m{X}:\,\begin{bmatrix} \cT\overline{\m{t}} & \cH\overline{\m{X}}_{:,l}\\ \cH\m{X}_{:,l} & \cT\m{t} \end{bmatrix} \in \bS_K^+, l \in \bL, \text{for some } \m{t} }
	\end{split}. \nonumber
} 
To efficiently solve the resulting problem, we propose a novel set $S_{\text{MF}}^C$ as follows:
\equ{
	\begin{split}
		& S_{\text{MF}}^C \triangleq \left\{ {\m{X}:\,  \cH\m{X}_{:,l} =\m{Z}^l\m{Z}^{l,T}, \cT\m{t}=\m{Z}^l\m{Z}^{l,H}, \; l \in \bL, } \right.\\
		& \left. { \qquad \qquad \quad \text{ for some } \m{t}\in\bC^N \; \text{and} \; \m{Z}^l \in \bC^{n\times K}, \; l \in \bL}  \right\}.
	\end{split} \nonumber
}
Then, we have the following theorem, of which the proof is similar to that of Theorem \ref{thm:t1} and will be omitted.
\begin{thm} \label{thm:t2}
	Assume $K<n$. Then  $S_{\text{MF}}^C = S_{\text{HT}}^C$.
\end{thm}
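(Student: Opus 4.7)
The plan is to prove the two inclusions $S_{\text{MF}}^C\subseteq S_{\text{HT}}^C$ and $S_{\text{HT}}^C\subseteq S_{\text{MF}}^C$ separately, in close analogy with the proof of Theorem~\ref{thm:t1}, but exploiting the CA structure to collapse the two factors $\m{Z}_1^l,\m{Z}_2^l$ used there into a single factor $\m{Z}^l$.

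For $S_{\text{MF}}^C\subseteq S_{\text{HT}}^C$, I would take any $\m{X}\in S_{\text{MF}}^C$ and apply entrywise complex conjugation to the defining identities $\cH\m{X}_{:,l}=\m{Z}^l\m{Z}^{l,T}$ and $\cT\m{t}=\m{Z}^l\m{Z}^{l,H}$ to obtain $\cH\overline{\m{X}}_{:,l}=\overline{\m{Z}^l}\,\m{Z}^{l,H}$ and $\cT\overline{\m{t}}=\overline{\m{Z}^l}\,\m{Z}^{l,T}$. These four identities together give
\[
\begin{bmatrix}\cT\overline{\m{t}} & \cH\overline{\m{X}}_{:,l}\\ \cH\m{X}_{:,l} & \cT\m{t}\end{bmatrix}
=\begin{bmatrix}\overline{\m{Z}^l}\\ \m{Z}^l\end{bmatrix}\begin{bmatrix}\overline{\m{Z}^l}\\ \m{Z}^l\end{bmatrix}^H,
\]
which lies in $\bS_K^+$ for every $l\in\bL$; hence $\m{X}\in S_{\text{HT}}^C$.

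For the reverse inclusion, I would invoke the tight relaxation result of \cite{wu2024direction}, namely $S_{\text{HT}}^C=S_0^C$ under $K<n$, so any $\m{X}\in S_{\text{HT}}^C$ admits the CA decomposition $\m{X}_{:,l}=\m{A}\sbra{\m{f}}\diag\sbra{\m{b}}\m{\Phi}_{:,l}$ with $b_k>0$ and $\Phi_{kl}=e^{i\phi_{kl}}$. Letting $\m{A}$ denote the first $n$ rows of $\m{A}\sbra{\m{f}}$, I would set
\[
\m{Z}^l = \m{A}\,\diag\sbra{\m{b}}^{1/2}\,\diag\sbra{\m{\Phi}_{:,l}}^{1/2},\quad l\in\bL,
\]
where the square root of $\diag\sbra{\m{\Phi}_{:,l}}$ is taken entrywise on the principal branch, yielding diagonal entries $e^{i\phi_{kl}/2}$. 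Using the commutativity of diagonal matrices, a direct computation then gives
\[
\m{Z}^l\m{Z}^{l,T} = \m{A}\,\diag\sbra{\m{b}\odot\m{\Phi}_{:,l}}\m{A}^T = \cH\m{X}_{:,l},\qquad
\m{Z}^l\m{Z}^{l,H} = \m{A}\,\diag\sbra{\m{b}}\m{A}^H,
\]
and the latter is a PSD Toeplitz matrix that is independent of $l$, hence equal to $\cT\m{t}$ for a single $\m{t}\in\bC^N$ by the Vandermonde decomposition lemma for PSD Toeplitz matrices. Therefore $\m{X}\in S_{\text{MF}}^C$.

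The main obstacle, relative to Theorem~\ref{thm:t1}, is structural rather than computational: it is precisely the CA condition $\abs{s_{kl}}=b_k$ that lets the common square-root factor $\diag\sbra{\m{b}}^{1/2}\diag\sbra{\m{\Phi}_{:,l}}^{1/2}$ simultaneously produce the Hankel factorization (through $\m{Z}^l\m{Z}^{l,T}$) and a \emph{channel-independent} Toeplitz factorization (through $\m{Z}^l\m{Z}^{l,H}$); without CA, the $l$-dependence of $\abs{s_{kl}}$ would force two separate factors as in Theorem~\ref{thm:t1}. Identifying this uniform square root and verifying its channel-independence is the only new ingredient; everything else is a direct adaptation of the argument for Theorem~\ref{thm:t1}.
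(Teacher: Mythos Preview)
Your proposal is correct and follows essentially the same route the paper intends: since the paper explicitly states that the proof of Theorem~\ref{thm:t2} ``is similar to that of Theorem~\ref{thm:t1} and will be omitted,'' the expected argument is precisely the two-inclusion scheme you wrote---one direction by exhibiting the block Hankel--Toeplitz matrix as the Gram matrix $\begin{bmatrix}\overline{\m{Z}^l}\\ \m{Z}^l\end{bmatrix}\begin{bmatrix}\overline{\m{Z}^l}\\ \m{Z}^l\end{bmatrix}^{\!H}$, and the other by passing through the parametric form in $S_0^C$ and constructing $\m{Z}^l=\m{A}\,\diag\sbra{\m{b}}^{1/2}\diag\sbra{\m{\Phi}_{:,l}}^{1/2}$. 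Your identification of the CA condition as exactly what makes $\m{Z}^l\m{Z}^{l,H}$ channel-independent is the key structural observation, and it matches the spirit of the paper's construction in \eqref{eq:Z1Z2}.
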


Using Theorem \ref{thm:t2} and reformulating the resulting problem, as in Section \ref{sec:Problem_MHTGD}, we obtain the following one:
\equ{ 
	\begin{split}
		\min_{\lbra{\m{Z}^l}} g\sbra{\lbra{\m{Z}^l}}  \triangleq 
		& \sum^L_{l=1} \mbra{ {\frac{1}{4p} \twon{ \cP_{\Omega} \cG^* \sbra{ \m{Z}^l\m{Z}^{l,T} - \cG\m{Y}_{:,l} } }^2 } \right.\\
		& \left. { \qquad + \frac{1}{4}\frobn{(\mathcal{I}-\cG\cG^*)(\m{Z}^l\m{Z}^{l,T})}^2} } \\
		& \qquad + \frac{1}{4}\frobn{(\mathcal{I}-\cW\cW^*)(\m{Z}^1 \m{Z}^{1,H})}^2 \\
		& \qquad + \frac{1}{4}\sum^L_{l=2} \frobn{ \m{Z}^1 \m{Z}^{1,H} - \m{Z}^l \m{Z}^{l,H} }^2.
	\end{split} \label{eq:FactoredModel}
}

As in Algorithm \ref{alg:MHTGD}, we propose a gradient descent algorithm, named CHTGD, to solve the problem in \eqref{eq:FactoredModel}. Let $\m{U}^l\m{\Sigma}^l\m{U}^{l,T} $ be the $K$-truncated Takagi factorization \cite{chebotarev2014singular} of the matrix $p^{-1}\cG\mathcal{P}_{\Omega}(\m{Y}_{:,l})$. Differently from MHTGD, in CHTGD, $\m{Z}^l$ is initialized by $\m{Z}^{l,0} = \m{U}^l\m{\Sigma}^{l,1/2}$ and the gradient $\nabla g \sbra{\lbra{\m{Z}^l}}$ is given by
\equ{ \small{
	\begin{split}
		\nabla g(\m{Z}^1)
		& = \cG\mbra{ \frac{1}{p} \cP_{\Omega}\sbra{\cG^*\sbra{\m{Z}^1\m{Z}^{1,T}}-\m{Y}_{:,1}} - \cG^*\sbra{\m{Z}^1\m{Z}^{1,T}}}\overline{\m{Z}^1}  \\
		& \quad - \cW\cW^*\sbra{\m{Z}^1\m{Z}^{1,H}}\m{Z}^1 + \m{Z}^1\sbra{\m{Z}^{1,T}\overline{\m{Z}^1} +L\m{Z}^{1,H}\m{Z}^1} \\
		& \quad - \sum^L_{q=2} \m{Z}^q \sbra{ \m{Z}^{q,H} \m{Z}^1 }, \nonumber
	\end{split} 
	}
}
\equ{ \small{
	\begin{split}
		& \nabla g(\m{Z}^l)
		= \cG\mbra{ \frac{1}{p} \cP_{\Omega}\sbra{\cG^*\sbra{\m{Z}^l\m{Z}^{l,T}}-\m{Y}_{:,l}} - \cG^*\sbra{\m{Z}^l\m{Z}^{l,T}}}\overline{\m{Z}^l} \\
		& \qquad \; + \m{Z}^l\sbra{\m{Z}^{l,T}\overline{\m{Z}^l} +\m{Z}^{l,H}\m{Z}^l} - \m{Z}^1 \sbra{ \m{Z}^{1,H} \m{Z}^l },  l = 2,\ldots,L. \nonumber
	\end{split} 
	}
}
CHTGD exhibits the same computational complexity as MHTGD. 

\section{Numerical Simulations} \label{sec:sim} 
We present numerical results to illustrate the performance of MHTGD and CHTGD. 
The algorithms stop when the condition $\frobn{\m{X}^{t+1} - \m{X}^t } / \frobn{\m{X}^t} \le 10^{-6}$ is met, or when they reach a maximum of $10^4$ iterations. The step size is determined via a backtracking line search following the Armijo criterion.
We compare our algorithms with ANM \cite{yang2016exact,li2015off} for general multichannel signals and SACA \cite{wu2024multichannel} for CA signals, both solved via an interior point method with CVX \cite{cvx}. 

We study the signal recovery performance in terms of phase transition, considering $N=65$ and $L=5$, with frequencies randomly generated with a minimum separation $1.5/N$. 
A signal is successfully recovered if $\frobn{ \smash{\widehat{\m{X}}} - \m{X}^\star }^2 / \frobn{\m{X}^\star}^2 \le 10^{-6}$.
The success rate is calculated by averaging over $20$ Monte Carlo trials for each combination $\sbra{M,K}$. As shown in Figure \ref{fig:1}, MHTGD and CHTGD perform comparably to ANM and SACA, respectively. 

\begin{figure}[htb] 
	\centering
	\subfigure{\includegraphics[width=3.6cm]{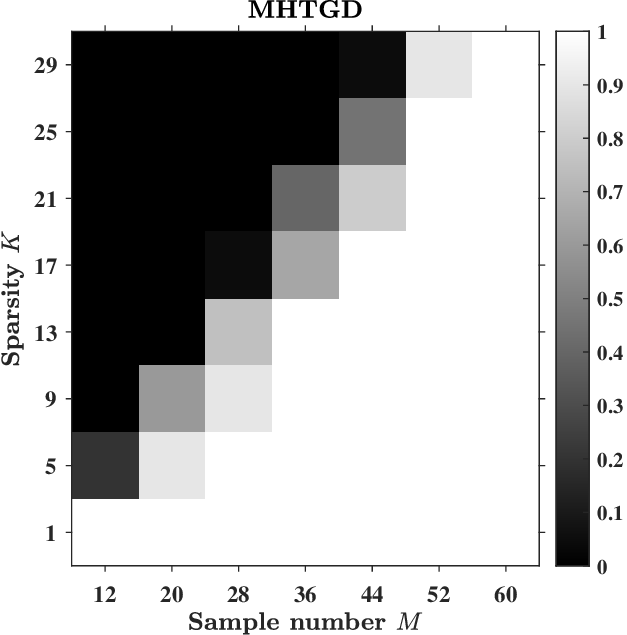}}\hspace{0.3cm}
	\subfigure{\includegraphics[width=3.6cm]{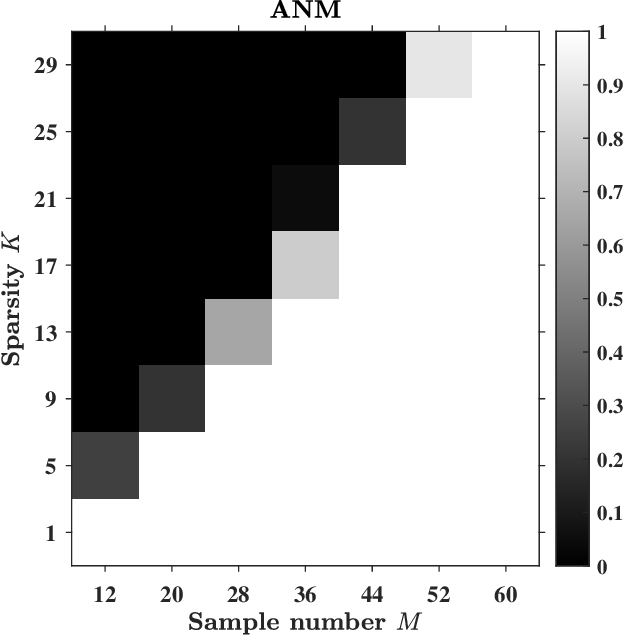}}
	
	\subfigure{\includegraphics[width=3.6cm]{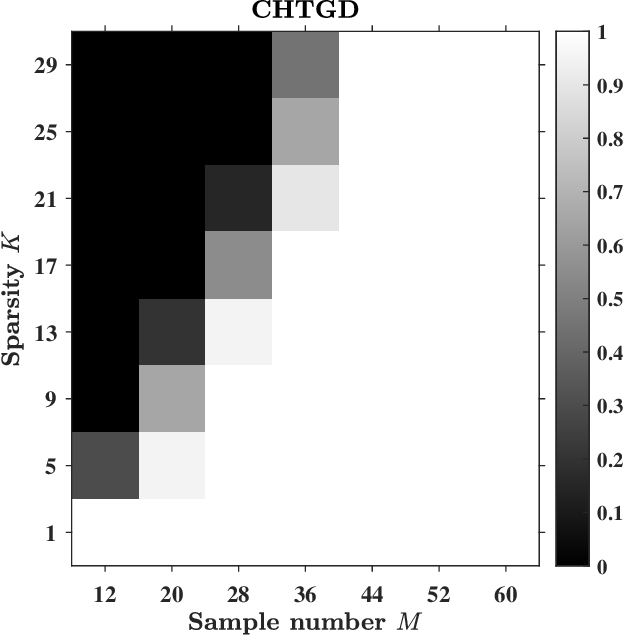}}\hspace{0.3cm}
	\subfigure{\includegraphics[width=3.6cm]{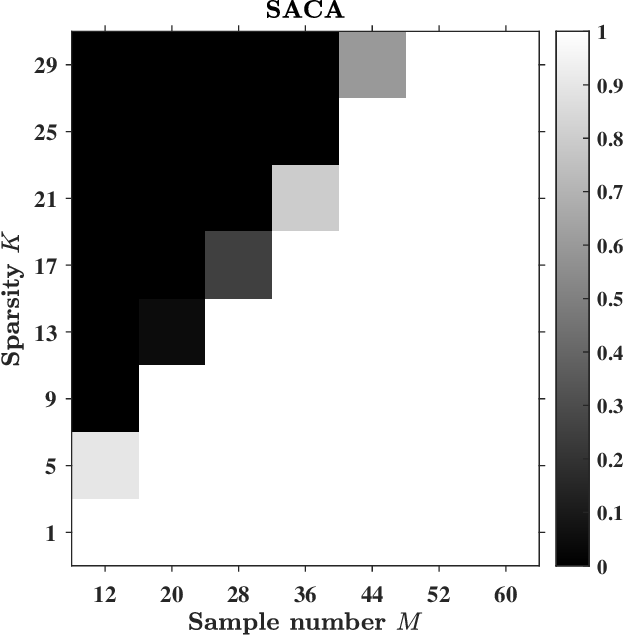}}
	\caption{Phase transitions of MHTGD and ANM for general multichannel signals, and CHTGD and SACA for CA signals. White means success, and vice versa.}
	\label{fig:1}
\end{figure}

We also evaluate the computational efficiency by setting $M = \lfloor 0.8N \rfloor$, $L=3$, and $K=3$ while varying $N$. Methods exceeding $100$ seconds in computational time are excluded from comparison.
The average time over 50 successful signal recovery trials is shown in Figure \ref{fig:time_vs_N}. We can see that MHTGD and CHTGD are significantly faster than ANM and SACA.

\begin{figure}[htbp]
	\centerline{\includegraphics[width=5.8cm]{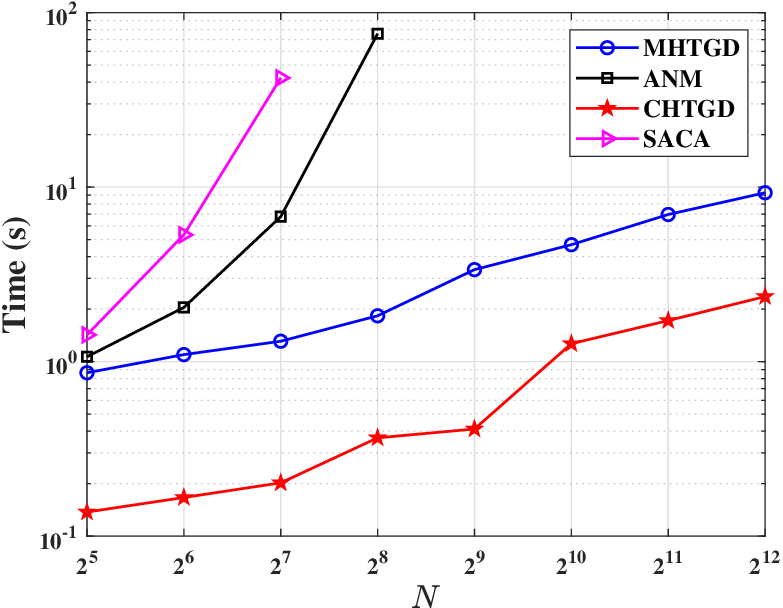}}
	\caption{Time versus $N$.}
	\label{fig:time_vs_N}
\end{figure}

\section{Conclusion}
In this paper, two low-rank Hankel-Toeplitz matrix factorization  problems and the associated low-complexity algorithms were proposed for multichannel spectral super-resolution: one for the case with constant amplitude (CA) and the other for the case without. Simulation results show the superiority of the proposed algorithms.

\newpage

\bibliographystyle{IEEEtran}
\bibliography{MHTGD}

\end{document}